\documentclass[a4paper]{amsart}
\usepackage{tikz}
\usetikzlibrary{patterns}
\usetikzlibrary{scopes}
\usepackage{hyperref}

\newtheorem{theorem}{Theorem}[section]
\newtheorem{proposition}[theorem]{Proposition}
\newtheorem{corollary}[theorem]{Corollary}

\theoremstyle{definition}
\newtheorem{definition}[theorem]{Definition}

\title
    [A multiply connected Rudin-Carleson theorem]
    {A Rudin-Carleson theorem for multiply connected domains with interpolation}
\author{Benedikt Steinar Magnusson}
\email{bsm@hi.is}
\author{Bergur Snorrason}
\email{bergur@hi.is}

\begin{document}
\begin{abstract}
Using an annular version of the F. and M. Riesz theorem,
    we prove a generalization of the Rudin-Carleson theorem for finitely connected bounded domains.
That is,
    for a continuous function on a closed set in the boundary of measure zero there is a holomorphic function on the
    domain continuous to the boundary.
Furthermore, this can be done with interpolation at finitely many points in the domain.
The proof relies on an annular version of the F. and M. Riesz theorem.
\end{abstract}
\maketitle
\section{Introduction}
\label{sec:01}
\noindent
The Rudin-Carleson theorem \cite{Car:1957, Rud:1956} states that a continuous function on a closed subset
    of the unit circle $\mathbb{T} = \{z \in \mathbb{C} \,;\, |z| = 1\}$
    of arclength measure $0$ can be extended to a continuous function on the closed unit disc
    $\overline{\mathbb{D}} = \{z \in \mathbb{C} \,;\, |z| \leq 1\}$
    that is holomorphic on the open unit disc
    $\mathbb{D} = \{z \in \mathbb{C} \,;\, |z| < 1\}$.
This result has been generalized and refined by various authors \cite{Dan:2010, Gam:1969, Bru:2022, Glo:1980}.
These generalizations either consider the unit disc as the domain of definition, as in the original result,
    or look at the problem from the viewpoint of uniform algebras on a compact Hausdorff space.
The goal of this paper is proving the following generalization of the theorem
    for multiply connected domains in $\mathbb{C}$.
\begin{theorem}
\label{thm:1.1}
Let
    $\Omega \subset \mathbb{C}$ be a $k$-connected domain,
    with $k > 1$,
    such that the boundary of $\Omega$ consists of $k$ pairwise disjoint Jordan curves,
    $E \subset \partial \Omega$ be a closed arclength null set,
    $f \colon E \rightarrow \mathbb{C}$ a continuous function, and
    $M \colon \partial \Omega \rightarrow ]0, \infty[$ a continuous function such that $|f| < M$ holds on $E$.
Then there exists a continuous function $F$ on $\overline{\Omega}$ that is holomorphic on $\Omega$, $F|_E = f$, and
    $|F| < M$ on $\partial \Omega$.
\end{theorem}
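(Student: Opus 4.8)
The plan is to recast the statement as a peak-interpolation problem for the uniform algebra of functions holomorphic on $\Omega$ and continuous up to $\partial\Omega$, and then to feed the annular F.~and M.~Riesz theorem into Bishop's theorem.

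Put $X=\partial\Omega$, a compact subset of $\mathbb C$ that is topologically a disjoint union of $k$ circles, and let $A$ be the algebra of restrictions to $X$ of functions in $A(\Omega):=C(\overline\Omega)\cap\mathcal O(\Omega)$. The maximum principle makes the restriction map $A(\Omega)\to C(X)$ an isometry onto a closed subspace, so $A$ is a uniform algebra on $X$: it is closed, contains the constants, and separates points (the coordinate function already does). In this language Theorem~\ref{thm:1.1} says exactly that, given $f\in C(E)$ and a continuous $M>0$ on $\partial\Omega$ with $|f|<M$ on $E$, there is $F\in A$ with $F|_E=f$ and $|F|<M$ on $\partial\Omega$.

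The link is Bishop's theorem in its sharp, variable-majorant form: if $E\subset X$ is closed and $|\mu|(E)=0$ for every $\mu\in A^{\perp}$, then for every $g\in C(E)$ and every strictly positive continuous $h$ on $X$ with $|g|\le h$ on $E$ there exists $F\in A$ with $F|_E=g$ and $|F|<h$ on $X\setminus E$. Applying this with $g=f$, $h=M$ produces $F\in A$---hence, after the unique holomorphic extension, a function in $C(\overline\Omega)\cap\mathcal O(\Omega)$---with $F|_E=f$ and $|F|<M$ on $\partial\Omega\setminus E$; and $|F|=|f|<M$ on $E$ too, so $|F|<M$ on all of $\partial\Omega$. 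Thus the theorem is reduced to verifying the hypothesis of Bishop's theorem, namely that $|\mu|(E)=0$ for every complex Borel measure $\mu$ on $\partial\Omega$ satisfying $\int_{\partial\Omega}g\,d\mu=0$ for all $g\in A(\Omega)$.

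This is exactly what the annular F.~and M.~Riesz theorem delivers: every such $\mu$ is absolutely continuous with respect to arclength measure on $\partial\Omega$, so $|\mu|(E)=0$ since $E$ is an arclength null set. Proving this is the substantial step and, I expect, the main obstacle---everything above is soft once it is in hand. I would prove it by reduction to the classical case on the circle: conformally map $\Omega$ onto a circular domain $D$ (the unit disc with $k-1$ disjoint closed subdiscs removed), which is possible for any finitely connected domain; since $\partial\Omega$ consists of Jordan curves this map extends to a homeomorphism $\overline\Omega\to\overline D$, and composition with it is an isometric isomorphism $A(D)\to A(\Omega)$ taking annihilating measures to annihilating measures. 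One also needs that the map transports arclength null sets to arclength null sets in both directions; this is part of the F.~and M.~Riesz theory for rectifiable Jordan boundaries (the boundary values of a conformal map onto a rectifiable Jordan domain have derivative in the Smirnov class $E^{1}$, nonzero almost everywhere) and is where the hypotheses on the boundary curves are really used. On the model domain $D$ the Cauchy transform $\widehat\mu(z)=\int(\zeta-z)^{-1}\,d\mu(\zeta)$ vanishes off $\overline D$, because $(\zeta-z)^{-1}\in A(D)$ for $z\notin\overline D$; localizing to each bounding circle and invoking the Plemelj jump formula reduces the claim to the classical assertion that a measure on a single circle whose Cauchy transform vanishes on one side belongs to $H^{1}$---equivalently, that a measure on $\mathbb T$ with one-sided vanishing Fourier spectrum is absolutely continuous. (Alternatively one may observe that $A(\Omega)$ is hypo-Dirichlet, its real parts having codimension $k-1$ in $C_{\mathbb R}(\partial\Omega)$ because of the conjugation periods around the holes, and quote the generalized F.~and M.~Riesz theorem for hypo-Dirichlet algebras together with the mutual absolute continuity of harmonic measure and arclength on rectifiable curves.) Finally, interpolation at finitely many interior points, as in the abstract, follows by perturbing $F$ by a suitably small multiple of an element of $A(\Omega)$ vanishing on $E$.
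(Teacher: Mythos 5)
Your strategy is sound and reaches the theorem, but by a genuinely different route from the paper. You apply Bishop's theorem once, directly to $X=\partial\Omega$ and the algebra $A(\Omega)|_{\partial\Omega}$, which forces you to prove the strong statement that \emph{every} measure annihilating $A(\Omega)$ on a $k$-connected domain is absolutely continuous with respect to (conformal) arclength; you propose to get this by mapping onto a circular domain and localizing the Cauchy transform of an annihilating measure to each boundary circle. That does work: on a circular domain the rational functions with poles in the complementary discs lie in the algebra, so the Cauchy transform vanishes on every complementary component, and the circle-by-circle reduction to the classical F.\ and M.\ Riesz theorem is precisely the decomposition the paper carries out for two circles in its proof of Theorem~\ref{thm:1.2}. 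The paper, by contrast, deliberately avoids the general measure statement: it proves the F.\ and M.\ Riesz property only for a round annulus (where the Laurent monomials $z^j$, $j\in\mathbb{Z}$, make the Fourier-coefficient condition immediate), obtains Proposition~\ref{prop:2.4} from Bishop's theorem there, and then assembles the $k$-connected extension by hand as $F=\sum_\ell F_\ell h_\ell$, where the $F_\ell$ come from the classical Rudin--Carleson theorem on the simply connected pieces $D_\ell$ and the $h_\ell$ are products of annular ``cut-off'' functions supplied by Proposition~\ref{prop:2.4} on the doubly connected subdomains $D_{j,\ell}$, with sup norms tuned via $\varepsilon$ and $\delta$ so that the majorant $M$ survives. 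Your approach is more conceptual and invokes Bishop only once; the paper's is more elementary, needing only the two-circle case of the measure theorem plus the doubly connected Riemann mapping theorem. One inaccuracy you should repair: the boundary components are only assumed to be Jordan curves, not rectifiable, and ``arclength null'' is the conformal notion of Definition~\ref{def:2.7}; your appeal to Smirnov-class derivatives to transport null sets is therefore unavailable, and the transport must instead be done by the Fatou-uniqueness argument the paper uses to show that Definition~\ref{def:2.7} is well posed (produce a nonconstant algebra element vanishing on the image set).
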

The notion of an arclength null subset of $\partial \Omega$ is defined in Definition~\ref{def:2.7}.
To prove this theorem we follow Bishop \cite{Bis:1962},
    who gave a description of how smalls sets can be to ensure that continuous functions can be extended.
It is not immediately clear that the classical Rudin-Carleson theorem is a corollary of
    Bishop's theorem (Thm.~\ref{thm:2.2}),
    but it follows from the F. and M. Riesz theorem (Thm.~\ref{thm:2.3}).
Theorem~\ref{thm:1.2} is an annular F. and M. Riesz theorem.
\begin{theorem}
\label{thm:1.2}
Let
    $r_0 \in ]0, 1[$,
    $A = \{z \in \mathbb{C} \,; r_0 < |z| < 1\}$,
    $\mu$ be a complex measure on $\partial A = \mathbb{T} \cup (r_0 \mathbb{T})$, and
    $\widehat{\mu}^0_j$ and $\widehat{\mu}^1_j$, for $j \in \mathbb{Z}$, be the Fourier coefficients of $\mu$
        when restricted to $r_0 \mathbb{T}$ and $\mathbb{T}$ respectively, given by
\begin{equation*}
    \widehat{\mu}^0_j
    =
    \int_{r_0 \mathbb{T}}
        z^{-j}\ d\mu
    \quad \text{ and } \quad
    \widehat{\mu}^1_j
    =
    \int_{\mathbb{T}}
        z^{-j}\ d\mu,
    \quad
    j \in \mathbb{Z}.
\end{equation*}
If $\widehat{\mu}^0_j = -\widehat{\mu}^1_j$ for all $j \in \mathbb{Z}$,
    then every subset of $\partial A$ of arclength measure $0$ is a null set with respect to $\mu$.
\end{theorem}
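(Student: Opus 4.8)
The plan is to prove the (formally stronger) statement that $\mu$ is absolutely continuous with respect to arclength measure on $\partial A$; the assertion about null sets is then immediate since $\mu$ is a finite measure. First I would record that the hypothesis is equivalent to $\int_{\partial A} z^{n}\, d\mu = 0$ for every $n \in \mathbb{Z}$ (the left side being $\widehat{\mu}^1_{-n} + \widehat{\mu}^0_{-n}$), i.e.\ that $\mu$ annihilates every Laurent polynomial. I would not try to exploit this on the annulus itself; instead I would split $\mu = \mu|_{\mathbb{T}} + \mu|_{r_0\mathbb{T}}$ and treat the two circles separately, reducing each to the classical one-circle F.\ and M.\ Riesz theorem (Theorem~\ref{thm:2.3}) on $\mathbb{T}$.

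For the outer circle, set $\mu_1 := \mu|_{\mathbb{T}}$, whose $j$-th Fourier coefficient is $\widehat{\mu}^1_j$. The point is that, although $\mu_1$ need not be an analytic measure, the hypothesis forces geometric decay of its negative-index coefficients: since $\widehat{\mu}^1_j = -\widehat{\mu}^0_j = -\int_{r_0\mathbb{T}} z^{-j}\, d\mu$ and $|z^{-j}| = r_0^{-j}$ on $r_0\mathbb{T}$, one gets $|\widehat{\mu}^1_j| \le r_0^{|j|}\,|\mu|(r_0\mathbb{T})$ for all $j \le 0$. Hence $\rho_1(z) := \sum_{j < 0} \widehat{\mu}^1_j\, z^{j}$ converges on $\{|z| > r_0\}$, in particular defines a continuous function on $\mathbb{T}$, and (with $m_{\mathbb{T}}$ the normalized arclength measure on $\mathbb{T}$) a direct computation of Fourier coefficients shows that $\sigma_1 := \mu_1 - \rho_1\, dm_{\mathbb{T}}$ is a finite complex measure with $\int_{\mathbb{T}} z^{n}\, d\sigma_1 = 0$ for $n = 1, 2, \dots$. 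Theorem~\ref{thm:2.3} then gives $\sigma_1 \ll m_{\mathbb{T}}$, so $\mu_1 = \sigma_1 + \rho_1\, dm_{\mathbb{T}} \ll m_{\mathbb{T}}$.

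For the inner circle I would push $\mu_0 := \mu|_{r_0\mathbb{T}}$ forward to a measure $\nu_0$ on $\mathbb{T}$ via $z \mapsto z/r_0$; a change of variables gives $\widehat{\nu_0}_j = r_0^{j}\,\widehat{\mu}^0_j = -r_0^{j}\,\widehat{\mu}^1_j$, and since $|\widehat{\mu}^1_j| = \bigl|\int_{\mathbb{T}} z^{-j}\, d\mu\bigr| \le |\mu|(\mathbb{T})$ for all $j$, it is now the \emph{positive}-index tail that decays geometrically, $|\widehat{\nu_0}_j| \le r_0^{j}\,|\mu|(\mathbb{T})$ for $j \ge 0$. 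Running the same argument on this side, $\sigma_0 := \nu_0 - \bigl(\sum_{j>0}\widehat{\nu_0}_j\, z^{j}\bigr) dm_{\mathbb{T}}$ satisfies $\int_{\mathbb{T}} z^{n}\, d\sigma_0 = 0$ for $n = -1, -2, \dots$; applying Theorem~\ref{thm:2.3} to $\overline{\sigma_0}$ yields $\sigma_0 \ll m_{\mathbb{T}}$, hence $\nu_0 \ll m_{\mathbb{T}}$, and since $z \mapsto z/r_0$ maps arclength-null sets to arclength-null sets this gives $\mu_0 \ll (\text{arclength on } r_0\mathbb{T})$. Combining the two circles, $\mu = \mu_0 + \mu_1$ is absolutely continuous with respect to arclength on $\partial A$, which is exactly the claim.

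I expect the only genuine content to be the middle observation: the symmetric hypothesis $\widehat{\mu}^0_j = -\widehat{\mu}^1_j$, which does \emph{not} make either restriction of $\mu$ analytic or co-analytic, nonetheless kills the ``wrong'' Fourier tail on each circle (negative-index on $\mathbb{T}$, positive-index on $r_0\mathbb{T}$) geometrically, so that subtracting a single real-analytic density puts us in position to quote the classical circle theorem. Everything else is routine bookkeeping: the Fourier-coefficient computations defining $\rho_1, \sigma_1, \nu_0, \sigma_0$; the standard complex-conjugate form of Theorem~\ref{thm:2.3}; and the elementary behaviour of arclength and absolute continuity under the dilation $z \mapsto z/r_0$.
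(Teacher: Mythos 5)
Your proposal is correct and follows essentially the same route as the paper: on each boundary circle you subtract off the ``wrong'' Fourier tail, which converges geometrically because the hypothesis transfers it from the other circle of a different radius, and then you invoke the classical one-circle F.\ and M.\ Riesz theorem. The only differences are cosmetic (dilating the inner circle to $\mathbb{T}$, using the conjugate of Theorem~\ref{thm:2.3}(i) rather than part (ii) directly, and phrasing the conclusion as absolute continuity); your sign bookkeeping is in fact cleaner than the paper's.
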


In Section \ref{sec:02}
    we discuss how the Rudin-Carleson theorem follows from Bishop's theorem,
    as well as
    showing how Theorem~\ref{thm:1.1} follows from Theorem~\ref{thm:1.2},

In Section \ref{sec:03} we prove Theorem \ref{thm:1.2}.

In Section \ref{sec:04} we prove a version of Theorem \ref{thm:1.1} that additionally allows interpolation at finite
    points in $\Omega$.
The upper bound $M$ can not be maintained in the theorem as it would possibly break the maximum principle.
Consider, for example, $\Omega = \mathbb{D}$ and $M = 1$.
If we require our extension to satisfy $F(0) = 1$ then,
    by the maximum modulus principle,
    no extension exists.
See also Izzo \cite{Izz:2018}.

\section{A Rudin-Carleson theorem for multiply connected domains}
\label{sec:02}
\noindent
In the 50's Carleson \cite{Car:1957} and Rudin \cite{Rud:1956} proved the following, independently.
\begin{theorem}[Rudin-Carleson theorem]
\label{thm:2.1}
Let $E \subset \mathbb{T}$ be closed and of arclength measure $0$ and $f \colon E \rightarrow \mathbb{C}$ be continuous.
There then exists a continuous function $F$ on $\overline{\mathbb{D}}$ that is holomorphic on $\mathbb{D}$ and
    $F|_E = f$.
\end{theorem}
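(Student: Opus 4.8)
The plan is to deduce the theorem from the abstract interpolation result of Bishop (Theorem~\ref{thm:2.2}) together with the F. and M. Riesz theorem (Theorem~\ref{thm:2.3}). First I would fix the functional-analytic framework: let $A$ denote the disc algebra, realized as the closed subalgebra of $C(\mathbb{T})$ consisting of those continuous functions on $\mathbb{T}$ whose negative Fourier coefficients vanish, equivalently the continuous functions on $\mathbb{T}$ that extend continuously to $\overline{\mathbb{D}}$ and holomorphically to $\mathbb{D}$. The circle $\mathbb{T}$ is the Shilov boundary of $A$, and $E \subset \mathbb{T}$ is the closed set in question, which by hypothesis has arclength (Lebesgue) measure zero.

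The key step is to identify the measures on $\mathbb{T}$ that annihilate $A$. A complex measure $\mu$ on $\mathbb{T}$ satisfies $\int_{\mathbb{T}} g\, d\mu = 0$ for all $g \in A$ precisely when $\int_{\mathbb{T}} z^n\, d\mu = 0$ for every integer $n \geq 0$, i.e.\ when $\widehat{\mu}_{-n} = 0$ for all $n \geq 0$. By the F. and M. Riesz theorem, any such $\mu$ is absolutely continuous with respect to arclength measure on $\mathbb{T}$. Consequently, since $E$ has arclength measure zero, $|\mu|(E) = 0$ for every annihilating measure $\mu$; that is, $E$ is a null set for the annihilator of $A$.

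With this established, Bishop's theorem applies directly: a closed subset of the Shilov boundary that is a null set for every annihilating measure is an interpolation set (indeed a peak-interpolation set) for $A$. Hence the given continuous $f \colon E \to \mathbb{C}$ extends to some $F \in A$ with $F|_E = f$; unwinding the identification of $A$ with functions on the closed disc, $F$ is continuous on $\overline{\mathbb{D}}$, holomorphic on $\mathbb{D}$, and restricts to $f$ on $E$, as required.

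Since both Bishop's theorem and the F. and M. Riesz theorem are taken as known, the only real work lies in the middle step, the description of the annihilator of $A$, which is the standard duality computation for the disc algebra; I do not expect a serious obstacle here. The point of isolating the argument in this form is that it is exactly the scheme that the proof of Theorem~\ref{thm:1.1} will generalize, with the annular F. and M. Riesz theorem (Theorem~\ref{thm:1.2}) replacing Theorem~\ref{thm:2.3}.
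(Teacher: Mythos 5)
Your proposal is correct and follows exactly the route the paper itself sketches: take $B$ to be the disc algebra on $X=\mathbb{T}$, observe that annihilating measures have vanishing Fourier coefficients $\widehat{\mu}_{-j}$ for $j\geq 0$ since $z\mapsto z^j$ lies in $B$, invoke the F. and M. Riesz theorem to get absolute continuity and hence $|\mu|(E)=0$, and conclude by Bishop's theorem. No gaps; this matches the paper's argument.
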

Bishop \cite{Bis:1962} later generalized this greatly.
\begin{theorem}[Bishop's theorem]
\label{thm:2.2}
Let 
    $X$ be a compact Hausdorff space,
    $B$ be a closed subset of the continuous functions on $X$ with the supremum norm, denoted by $\mathcal{C}(X)$,
    $E \subset X$ be a closed null set with respect to all measures $\mu$ such that $\int_X g\ d\mu = 0$ for all
        $g \in B$,
    $f \colon E \rightarrow \mathbb{C}$ be continuous, and
    $M \colon X \rightarrow ]0, \infty[$ be continuous and such that $|f| < M$ on $E$.
Then there exists $F \in B$ such that $|F| < M$ and $F|_E = f$.
\end{theorem}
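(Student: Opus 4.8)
The plan is to follow Bishop: show that the restriction map $R\colon B\to\mathcal{C}(E)$, $g\mapsto g|_E$, is surjective with a ``majorant'' bound made possible by the smallness of $E$, and then upgrade an approximate version of this to the exact interpolation asserted, by a successive-approximation argument carried out inside $B$ (this uses that $B$ is a closed linear subspace). The hypothesis on $E$ --- that $|\mu|(E)=0$ for every $\mu$ with $\int_X g\,d\mu=0$ for all $g\in B$, i.e. for every $\mu\in B^\perp$ --- enters at exactly one point, through a Hahn--Banach duality.

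The core is the following lemma: for every $f\in\mathcal{C}(E)$, every $\varepsilon>0$, and every continuous $u\colon X\to\,]0,\infty[$ with $u>|f|$ on $E$, there is $g\in B$ with $\|g|_E-f\|_{\mathcal{C}(E)}<\varepsilon$ and $|g|<u$ on all of $X$. I would prove it with the bipolar theorem. It is enough to place $f$ in the $\mathcal{C}(E)$-closure of the convex, circled set $S=\{\,g|_E : g\in B,\ |g|\le u\text{ on }X\,\}$ (after which replacing $g$ by $(1-\delta)g$ for small $\delta$ makes the majorant inequality strict). By the bipolar theorem, $f\in\overline S$ amounts to the estimate $\bigl|\int_E f\,d\nu\bigr|\le\sup\{\,|\int_E g\,d\nu| : g\in B,\ |g|\le u\text{ on }X\,\}$ for every measure $\nu$ on $E$. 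Regarding $\nu$ as a measure on $X$ and using the Hahn--Banach duality for the (sup-equivalent) norm $\|h\|_u:=\|h/u\|_{\mathcal{C}(X)}$ on $\mathcal{C}(X)$, whose dual norm on measures is $\sigma\mapsto\int_X u\,d|\sigma|$, the right-hand side equals $\inf\{\,\int_X u\,d|\nu-\mu| : \mu\in B^\perp\,\}$. But by hypothesis every $\mu\in B^\perp$ is null on $E$, so $\int_X u\,d|\nu-\mu|\ge\int_E u\,d|\nu|\ge\int_E|f|\,d|\nu|\ge\bigl|\int_E f\,d\nu\bigr|$, which is precisely the required estimate. (Letting $u$ range over all continuous majorants of $|f|$ on $E$ also recovers, as a special case, that $R$ has dense range.)

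Granting the lemma, I would choose --- using Tietze's extension theorem, compactness of $X$, and $|f|<M$ on $E$ --- a continuous $u\colon X\to\,]0,\infty[$ with $u<M$ on $X$ together with a constant $\theta\in\,]0,1[$ such that $|f|<\theta u$ on $E$. Put $f_0:=f$, apply the lemma to $f_0$ with majorant $\theta u$, and then inductively apply it to each remainder $f_{n-1}$ (for $n\ge 2$) with a majorant lying below $2^{-n}(1-\theta)u$ on $X$ and above $|f_{n-1}|$ on $E$ --- possible once the tolerances $\varepsilon_n\downarrow 0$ are chosen to decrease fast enough --- obtaining $g_n\in B$ with $g_n|_E=f_{n-1}-f_n$, $\|f_n\|_{\mathcal{C}(E)}\to 0$, $|g_1|<\theta u$, and $|g_n|<2^{-n}(1-\theta)u$ for $n\ge 2$. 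Then $F:=\sum_{n\ge 1}g_n$ converges uniformly, so $F\in B$ since $B$ is closed; the telescoping gives $F|_E=f$; and $|F|\le|g_1|+\sum_{n\ge 2}|g_n|<\theta u+(1-\theta)u=u<M$ on $X$.

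I expect the duality step in the second paragraph to be the conceptual crux of the argument: it is the only place where the measure-theoretic smallness of $E$ is used, and it is what converts ``$E$ is $B^\perp$-null'' into an honest majorized interpolant. The successive-approximation argument is the principal bookkeeping; it is routine, but the majorants have to be apportioned so that the resulting total stays strictly below $M$. Everything else is standard functional analysis.
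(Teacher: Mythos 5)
The paper does not prove this statement: it is quoted from Bishop's 1962 paper and used as a black box, so there is no in-paper argument to compare yours against. Judged on its own, your proposal is the standard proof of Bishop's theorem and it is correct. The duality computation is the right crux: identifying $\sup\{|\int_E g\,d\nu| : g\in B,\ |g|\le u\}$ with the distance from $\nu$ to $B^\perp$ in the dual of the weighted norm $\|h\|_u=\|h/u\|_{\infty}$, and then using $|\mu|(E)=0$ for $\mu\in B^\perp$ to bound that distance below by $\int_E u\,d|\nu|\ge|\int_E f\,d\nu|$, is exactly how the hypothesis on $E$ enters; the bipolar theorem then places $f$ in the closure of $\{g|_E : g\in B,\ |g|\le u\}$. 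The successive-approximation step is also sound: since $\|f_{n-1}\|_{\mathcal{C}(E)}<\varepsilon_{n-1}$ and $\min_X u>0$, you can indeed fit a constant majorant between $|f_{n-1}|$ and $2^{-n}(1-\theta)u$ once the $\varepsilon_n$ shrink fast enough, and the geometric apportionment $\theta u+\tfrac12(1-\theta)u<u<M$ closes the argument.

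One point worth making explicit: your proof uses at several places that $B$ is a closed \emph{linear subspace} of $\mathcal{C}(X)$ --- for $S$ to be convex and circled, for the identification $\|\nu|_B\|=\operatorname{dist}(\nu,B^\perp)$, and for $F=\sum_n g_n$ to land back in $B$. The theorem as stated in the paper says only ``closed subset,'' which is an imprecision (the result is false for general closed subsets and is stated by Bishop for closed subspaces); you correctly read it as a subspace, but since your text will sit next to the statement you should say so explicitly rather than in an aside.
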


To prove the Rudin-Carleson theorem using Bishop's theorem we set $X = \mathbb{T}$ and $B$
    as the restriction of the continuous function on $\overline{\mathbb{D}}$, that are holomorphic on $\mathbb{D}$,
    to $\mathbb{T}$.
We just need to show that if $E$ is of arclength measure $0$ then it is a null set with respect to all measures $\mu$
    such that $\int_X g\ d\mu = 0$, for all $g \in B$.
This is a consequence of the F. and M. Riesz theorem.
\begin{theorem}[F. and M. Riesz theorem]
\label{thm:2.3}
Let $\mu$ be a complex measure on $\mathbb{T}$ and define its Fourier coefficients by
\begin{equation*}
    \widehat{\mu}_j
    =
    \int_{\mathbb{T}} z^{-j}\ d\mu,
    \quad
    j \in \mathbb{Z}.
\end{equation*}
Then
\begin{enumerate}
\item[(i)]
If $\widehat{\mu}_{-j} = 0$, for all $j \in \mathbb{N}^*$, then every $E \subset \mathbb{T}$ of arclength measure $0$
    is a null set with respect to $\mu$.
Additionally, we have, by the Radon-Nikodym theorem, that there exists $F \in L^1(\mathbb{T})$,
    such that $d\mu = F\,d\sigma$, where $\sigma$ is the arclength measure standardized as a probability
    measure,
    and $F$ are the non-tangential boundary values of $f(z) = \sum_{j = 0}^\infty \widehat{\mu}_j z^j$, $|z| < 1$.

\item[(ii)]
If $\widehat{\mu}_j = 0$, for all $j \in \mathbb{N}^*$, then $d\mu = F\, d\sigma$
    where $F$ are the non-tangential boundary values of $f(z) = \sum_{j = 0}^\infty \widehat{\mu}_j z^{-j}$, $|z| > 1$.
\end{enumerate}
\end{theorem}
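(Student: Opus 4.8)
The plan is to prove part (i) in full and then obtain (ii) from it by a reflection, since the two hypotheses are interchanged by the map $z \mapsto \bar z$ on $\mathbb{T}$. So I assume throughout that $\widehat{\mu}_{-j} = 0$ for every $j \in \mathbb{N}^*$, so that the only surviving Fourier coefficients are the $\widehat{\mu}_n$ with $n \geq 0$. The guiding observation is that, because all negative-index coefficients vanish, the Poisson integral of $\mu$ is \emph{already} holomorphic, which lets me feed $\mu$ directly into the boundary theory of the Hardy space $H^1$.

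First I would make that observation precise. Writing the Poisson kernel as $P(re^{i\theta}, e^{i\phi}) = \sum_{n \in \mathbb{Z}} r^{|n|} e^{in(\theta - \phi)}$ and integrating term by term in $\mu$ gives, with $P[\mu]$ denoting the Poisson integral,
\begin{equation*}
    (P[\mu])(re^{i\theta}) = \sum_{n \in \mathbb{Z}} \widehat{\mu}_n \, r^{|n|} e^{in\theta} = \sum_{n \geq 0} \widehat{\mu}_n \, r^n e^{in\theta} = f(re^{i\theta}),
\end{equation*}
the middle equality being exactly the hypothesis. Thus $f$, a priori only a formal power series, coincides with $P[\mu]$ and is holomorphic on $\mathbb{D}$. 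The second step is to deduce $f \in H^1$: since the Poisson kernel is nonnegative with $\int_{\mathbb{T}} P(r\eta, \zeta)\, d\sigma(\eta) = 1$ for each fixed $\zeta$, Fubini's theorem yields $\int_{\mathbb{T}} |f(r\eta)|\, d\sigma(\eta) \leq |\mu|(\mathbb{T})$ for all $r \in \,]0,1[$, whence $\sup_r \|f_r\|_{L^1(\sigma)} \leq |\mu|(\mathbb{T}) < \infty$.

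The third step invokes the boundary theory of $H^1$: every $f \in H^1(\mathbb{D})$ has non-tangential boundary values $F \in L^1(\mathbb{T})$ almost everywhere and is recovered as the Poisson integral of its boundary measure, $f = P[F\, d\sigma]$. Comparing with the first step gives $P[\mu] = P[F\, d\sigma]$, hence $P[\mu - F\, d\sigma] = 0$. Reading off the Fourier coefficients of this harmonic function at each radius shows that every Fourier coefficient of the complex measure $\mu - F\, d\sigma$ vanishes, and since trigonometric polynomials are dense in $C(\mathbb{T})$ this forces $\mu = F\, d\sigma$. Consequently, if $\sigma(E) = 0$ then $|\mu|(E) = \int_E |F|\, d\sigma = 0$, so every arclength null set is a $\mu$-null set, and $F$ is precisely the boundary function of $f(z) = \sum_{n \geq 0} \widehat{\mu}_n z^n$, as claimed. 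For (ii) I would apply the result just proved to the pushforward of $\mu$ under $z \mapsto \bar z$, which interchanges the roles of positive and negative indices and fixes $\sigma$; this returns $d\mu = F\, d\sigma$ with $F$ the boundary function of the corresponding series on $\{|z| > 1\}$.

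I expect the third step to be the only genuine obstacle: the existence of non-tangential boundary values together with the Poisson (equivalently Cauchy) representation for $H^1$ functions is the one substantial analytic input, everything else being short manipulations of the Poisson kernel and Fourier coefficients. I would establish it by the standard route of dividing out a Blaschke product to reduce to a zero-free function, extracting a square root to pass to $H^2$, where the boundary theory is elementary Hilbert-space and $\ell^2$ theory, and then multiplying the factors back together; alternatively it may simply be quoted from the classical theory of Hardy spaces.
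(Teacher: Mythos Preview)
The paper does not actually prove Theorem~2.3: it is quoted as a classical result, with a reference to Rudin's \emph{Real and Complex Analysis} (Theorem~17.13), and the only additional remark is that part~(ii), not usually stated in that form, follows from~(i) ``by considering the conjugate of the measure~$\mu$.'' So there is no in-paper proof against which to compare your argument.

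That said, your proposal is a correct and standard route to the F.~and M.~Riesz theorem: identifying the Poisson integral of $\mu$ with the analytic series $f$, bounding the $L^1$ means to land in $H^1$, and then invoking the boundary theory of $H^1$ (non-tangential limits plus Poisson representation) to conclude $d\mu = F\,d\sigma$. You are also right that the $H^1$ boundary theory is the only substantive input and that the Blaschke--square-root reduction to $H^2$ is the usual way to establish it. For part~(ii), your reduction via the pushforward under $z\mapsto\bar z$ is essentially equivalent to the paper's hint of passing to the conjugate measure $\bar\mu$: both operations send $\widehat{\mu}_j$ to (a conjugate of) $\widehat{\mu}_{-j}$ and preserve~$\sigma$, so either device swaps the hypotheses of~(i) and~(ii). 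Nothing is missing from your outline.
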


Classically, (ii) is not included.
It follows from (i) by considering the conjugate of the measure $\mu$.

The F. and M. Riesz theorem can be found as Theorem $17.13$ in Rudin \cite{Rudin:1987}.
The proof of the Rudin-Carleson theorem using Bishop's theorem is then concluded by noting that the functions
    $z \mapsto z^j$, for $j \in \mathbb{N}^*$, all belong to $B$.

\medbreak
We now turn to proving Theorem \ref{thm:1.1} in a similar manner using Theorem \ref{thm:1.2}, which we will prove in
    Section \ref{sec:03}.
We will start by proving Theorem~\ref{thm:1.1} for the case when $\Omega$ is annular.

If $A = \{z \in \mathbb{C} \,;\, r_0 < |z| < 1\}$,
    for some $r_0 > 0$,
    then $\partial A = \mathbb{T} \cup r_0 \mathbb{T}$.
We say that $E \subset \partial A$ is of arclength measure $0$,
    if both $E \cap \mathbb{T}$ and $E \cap r_0 \mathbb{T}$, have arclength measure $0$.

\begin{proposition}
\label{prop:2.4}
Let
    $r_0 \in ]0, 1[$,
    $A = \{z \in \mathbb{C} \,; r_0 < |z| < 1\}$,
    $E \subset \partial A$ be closed and of arclength measure $0$,
    $f \colon E \rightarrow \mathbb{C}$ be a continuous function, and
    $M \colon \partial A \rightarrow ]0, \infty]$ be a continuous function such that $|f| < M$ on $E$.
Then there exists a continuous function $F$ on $\overline{A}$ that is holomorphic on $A$, $|F| < M$ and $F|_E = f$.
\end{proposition}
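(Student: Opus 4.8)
The plan is to deduce the proposition from Bishop's theorem (Theorem~\ref{thm:2.2}), exactly as the classical Rudin--Carleson theorem was deduced above from the F.\ and M.\ Riesz theorem, with Theorem~\ref{thm:1.2} playing the role that Theorem~\ref{thm:2.3} played there. I would take the compact Hausdorff space to be $X = \partial A = \mathbb{T} \cup r_0\mathbb{T}$ and let $B \subset \mathcal{C}(\partial A)$ be the set of restrictions to $\partial A$ of functions that are continuous on $\overline{A}$ and holomorphic on $A$. The first routine point is that $B$ is a closed subspace of $\mathcal{C}(\partial A)$: by the maximum modulus principle on the bounded domain $A$, restriction to $\partial A$ is isometric for the supremum norms, so a sequence in $B$ that is uniformly Cauchy on $\partial A$ lifts to a uniformly Cauchy sequence of functions continuous on $\overline A$ and holomorphic on $A$, whose limit (continuous by uniform convergence, holomorphic on $A$ by Weierstrass) is again of this form. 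A second, minor point: if $M$ is allowed to attain $+\infty$, I would first replace it by $\min(M, c)$ for a constant $c > \sup_E |f|$, which is a continuous $]0,\infty[$-valued function still dominating $|f|$ on $E$, so that it suffices to find $F$ with $|F| < \min(M,c) \le M$.

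Granting these, the only thing that genuinely needs proof is the hypothesis of Bishop's theorem: that $E$ is a null set for every complex measure $\mu$ on $\partial A$ with $\int_{\partial A} g\, d\mu = 0$ for all $g \in B$. Here I would use that $0 \notin \overline A$, so that every Laurent monomial $z \mapsto z^{-j}$, $j \in \mathbb{Z}$, is holomorphic on a neighbourhood of $\overline A$ and hence lies in $B$. Testing such a $\mu$ against these monomials and splitting the integral over the two boundary circles gives, in the notation of Theorem~\ref{thm:1.2},
\begin{equation*}
    \widehat{\mu}^0_j + \widehat{\mu}^1_j
    = \int_{r_0\mathbb{T}} z^{-j}\, d\mu + \int_{\mathbb{T}} z^{-j}\, d\mu
    = \int_{\partial A} z^{-j}\, d\mu = 0,
    \qquad j \in \mathbb{Z},
\end{equation*}
which is precisely the cancellation condition $\widehat{\mu}^0_j = -\widehat{\mu}^1_j$ required by Theorem~\ref{thm:1.2}. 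That theorem then tells us that every arclength null subset of $\partial A$, in particular the closed set $E$, is $\mu$-null. As $\mu$ was arbitrary, Bishop's theorem applies and furnishes $F \in B$ with $|F| < M$ and $F|_E = f$; lifting $F$ back to a continuous function on $\overline A$ that is holomorphic on $A$ completes the argument.

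I do not expect a real obstacle in this reduction once Theorem~\ref{thm:1.2} is available: the entire difficulty of the annular case is concentrated in that theorem, proved in Section~\ref{sec:03}. The single place where the annular geometry enters here is the observation that $z^{-1} \in B$ because $0 \notin \overline A$ --- for the disc this fails, which is exactly why the classical argument has only the one-sided monomials $z^j$, $j \ge 0$, at its disposal and ends with the one-sided Fourier vanishing of Theorem~\ref{thm:2.3}(i) rather than the two-sided cancellation of Theorem~\ref{thm:1.2}.
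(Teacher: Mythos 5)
Your proposal is correct and follows essentially the same route as the paper: reduce to Bishop's theorem with $B$ the boundary restrictions of functions continuous on $\overline{A}$ and holomorphic on $A$, test an annihilating measure against the monomials $z^{-j}$, $j \in \mathbb{Z}$ (all in $B$ since $0 \notin \overline{A}$), and invoke Theorem~\ref{thm:1.2}. The additional remarks on the closedness of $B$ and on truncating $M$ when it takes the value $+\infty$ are details the paper leaves implicit, but they do not change the argument.
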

\begin{proof}
\looseness = -1
Let $\mu$ be a complex measure on $\partial A$ such that $\int_{\partial A} g\ d\mu = 0$
    for all function $g$ on $\partial A$ that extend continuously to holomorphic functions on $A$.
To show that $E$ satisfies the conditions of Bishop's theorem we need to show that it is a null set with respect to
    $\mu$.
This will conclude the proof.
By assumption
\begin{equation*}
    0
    =
    \int_{\partial A} z^j\ d\mu
    =
    \int_{r_0 \mathbb{T}} z^j\ d\mu
    +
    \int_{\mathbb{T}} z^j\ d\mu,
    \quad
    j \in \mathbb{Z},
\end{equation*}
so $\mu$ satisfies the conditions of Theorem \ref{thm:1.2}.
So $E$ is a null set with respect to $\mu$.
\end{proof}

The specific case when $f = 0$ is not interesting, since we can chose $F = 0$.
The ability to chose a non-constant extension will be useful later.
\begin{corollary}
The extension $F$ in the previous proposition can be chose such that it is non-constant.
\end{corollary}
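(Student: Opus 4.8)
The plan is to reduce the non-constant case to the case already handled in Proposition~\ref{prop:2.4} by enlarging either the set $E$ or the data $f$ so that the resulting interpolation problem cannot be solved by a constant. First I would dispose of the trivial situation: if $f$ is not constant, then any $F$ extending $f$ is automatically non-constant, so there is nothing to prove. Hence assume $f \equiv c$ for some constant $c \in \mathbb{C}$ (including the case $E = \varnothing$, where $c$ is unconstrained).

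The idea is to pick one more boundary point and prescribe a second value there. Concretely, choose a point $p \in \partial A \setminus E$; since $E$ is closed with empty interior in $\partial A$, such a $p$ exists, and we may pick it so that $M(p) > 0$ lets us choose a value $c' \neq c$ with $|c'| < M(p)$ (possible because $M(p) > 0$ gives a whole disc of admissible values, which is not a single point). Set $E' = E \cup \{p\}$, which is again closed and of arclength measure $0$, and define $f' \colon E' \to \mathbb{C}$ by $f'|_E = f$ and $f'(p) = c'$. Then $f'$ is continuous on $E'$ (the single added point is isolated from $E$, since $E$ is closed and $p \notin E$), and $|f'| < M$ on $E'$. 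Applying Proposition~\ref{prop:2.4} to $E'$, $f'$, $M$ yields a continuous $F$ on $\overline{A}$, holomorphic on $A$, with $|F| < M$ and $F|_{E'} = f'$; in particular $F|_E = f$, and $F(p) = c' \neq c = F|_E$ (or, when $E = \varnothing$, we instead prescribe two distinct points $p_1, p_2$ with distinct admissible values), so $F$ takes at least two distinct values and is therefore non-constant.

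The only point requiring a little care — the main (mild) obstacle — is ensuring that the enlarged set $E'$ still satisfies the hypotheses of Proposition~\ref{prop:2.4}: it must remain closed, of arclength measure $0$, and the extended function must remain continuous with $|f'| < M$. Closedness and the null-set property are immediate since we add only finitely many points; continuity of $f'$ holds because each added point is isolated in $E'$ (as $E$ is closed and the new points lie in its complement); and the strict inequality $|f'| < M$ is arranged by the choice of the prescribed values inside the open discs of radius $M(p_i)$. With these checks in place the corollary follows directly.
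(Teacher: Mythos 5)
Your argument is exactly the paper's: assume $f$ is constant, adjoin a point $p\in\partial A\setminus E$ with a prescribed value $w\neq c$ satisfying $|w|<M(p)$, and apply Proposition~\ref{prop:2.4} to the enlarged data. The additional checks you record (closedness, measure zero, continuity at the isolated point, the strict bound) are correct and only make explicit what the paper leaves implicit.
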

\begin{proof}
Assume $f$ is constant, $p \in \partial A \setminus E$, and $w \in \mathbb{C}$ such that $f \neq w$ and $|w| < M(p)$.
We can then apply the previous Proposition with $E \cup \{p\}$ and $f$ extended to $p$ with $f(p) = w$.
\end{proof}

To adapt this result to multiply connect domains we need a variant of the Riemann mapping theorem.
Recall that a domain in the Riemann sphere is \emph{$k$-connect} if its complement has $k$ connected components,
    and it is \emph{multiply connected} if it is $k$-connected for some $k > 1$.
If $k = 2$ we say the domain is \emph{doubly connected}.

\begin{theorem}[Doubly connected Riemann mapping theorem]
\label{thm:2.6}
Let $\Omega$ be a bounded doubly connected domain in $\mathbb{C}$ whose boundary consists of two disjoint Jordan curves.
Then there exists a biholomorphic mapping from $\Omega$ to an annulus $A = \{z \in \mathbb{C} \,; r_0 < |z| < 1\}$,
    for some $r_0 > 0$, that extends continuously to the boundary.
\end{theorem}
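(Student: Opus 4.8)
The plan is to combine two classical ingredients: the uniformisation of a doubly connected domain by an annulus, which gives the biholomorphism on the interior, and Carath\'eodory's theorem on the boundary correspondence of conformal maps, which I would apply locally along each of the two boundary curves to get continuity up to $\overline\Omega$. Write $\gamma_0,\gamma_1$ for the two boundary Jordan curves, labelled so that $\gamma_0$ is the boundary of the bounded component $K$ of $\mathbb C\setminus\overline\Omega$ and $\gamma_1$ the boundary of the unbounded one.

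For the interior map I would first solve the Dirichlet problem. Since $\Omega$ is bounded by Jordan curves, every boundary point admits a barrier (an arc of the bounding curve serves), so there is a unique $u\in C(\overline\Omega)$, harmonic on $\Omega$, with $u\equiv0$ on $\gamma_0$ and $u\equiv1$ on $\gamma_1$, and $0<u<1$ on $\Omega$ by the maximum principle. Let $P:=\int_\sigma {*du}>0$ be the period of the conjugate differential over a cycle $\sigma\subset\Omega$ separating $\gamma_0$ from $\gamma_1$; as $H_1(\Omega;\mathbb Z)$ is generated by $\sigma$, the locally defined harmonic conjugate $\tilde u$ of $u$ has exactly this one period, so
\[
    F_0:=\exp\!\Big(\tfrac{2\pi}{P}\bigl(u+i\tilde u\bigr)\Big)
\]
is a single-valued holomorphic function on $\Omega$ with $|F_0|=e^{2\pi u/P}$, taking values in $\{1<|w|<e^{2\pi/P}\}$. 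That $F_0$ is a biholomorphism onto this annulus is a routine argument-principle count: for $w_0$ in the annulus and $\delta>0$ small, on $\{\delta<u<1-\delta\}$ the function $F_0-w_0$ is zero-free on the two boundary level curves, and since $\arg F_0=\frac{2\pi}{P}\tilde u$ increases by exactly $2\pi$ around the separating cycle, $F_0$ takes the value $w_0$ exactly once there; letting $\delta\to0$ gives bijectivity. Rescaling, $F:=e^{-2\pi/P}F_0$ is a biholomorphism of $\Omega$ onto $A=\{r_0<|z|<1\}$ with $r_0=e^{-2\pi/P}$, carrying (in the sense of cluster sets) $\gamma_0$ to $r_0\mathbb T$ and $\gamma_1$ to $\mathbb T$.

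The hard part is boundary continuity, the obstacle being that Carath\'eodory's theorem is usually stated only for simply connected domains. The modulus $|F|=e^{2\pi(u-1)/P}$ already extends continuously to $\overline\Omega$ by the regularity of $u$, so only the argument needs to be controlled. I would localise: for $p\in\gamma_1$, choose a crosscut of $\Omega$ near $p$ with endpoints on $\gamma_1$ and closure disjoint from $\gamma_0$, cutting off a simply connected Jordan subdomain $U$ whose boundary is that crosscut together with a subarc $\beta\subset\gamma_1$ having $p$ in its relative interior. A Riemann map $\varphi\colon\mathbb D\to U$ extends to a homeomorphism of closures by Carath\'eodory ($U$ being a Jordan domain), and $F\circ\varphi$ is conformal on $\mathbb D$, extends continuously to $\mathbb T$ off the arc corresponding to $\beta$ (there it equals $F$ on the interior crosscut, which is compactly contained in $\Omega$), and has modulus tending to $1$ along that arc; a cluster-set argument (equivalently the Carath\'eodory--Torhorst continuity theorem, the relevant boundary set being locally connected) then forces continuity across all of $\mathbb T$, so $F$ extends continuously to $U\cup\beta$, a neighbourhood of $p$ in $\overline\Omega$. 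The same argument applies near $\gamma_0$; by compactness of $\partial\Omega$ finitely many such neighbourhoods cover it, and the local extensions agree on overlaps, giving a continuous extension of $F$ to $\overline\Omega$. (Alternatively, one may simply quote the prime-end form of Carath\'eodory's theorem for finitely connected domains all of whose boundary components are Jordan curves.)
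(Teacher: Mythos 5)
The paper does not actually prove this statement: it disposes of it by citing Theorem $4.2.1$ of Krantz for the interior uniformization and the Carath\'eodory extension theorem of Greene--Krantz for the boundary behaviour. Your proposal is, in effect, an unpacking of both citations. The harmonic-measure construction (solve the Dirichlet problem with data $0$ on $\gamma_0$ and $1$ on $\gamma_1$, form $F_0=\exp\bigl(\tfrac{2\pi}{P}(u+i\tilde u)\bigr)$, and count preimages by the argument principle on the level regions $\{\delta<u<1-\delta\}$) is the standard proof of the annulus uniformization theorem, and your crosscut localization is exactly the reduction needed to apply the simply connected Carath\'eodory theorem near each boundary curve --- a reduction the paper leaves entirely implicit. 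So your route is compatible with the paper's but actually supplies an argument; the gain is self-containedness, the cost is length.

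One step deserves tightening. To get continuity of $G=F\circ\varphi$ across the arc of $\mathbb{T}$ corresponding to $\beta$ you invoke Carath\'eodory--Torhorst, which requires local connectivity of $\partial\bigl(F(U)\bigr)$; verifying that is not obviously easier than the continuity you are after, since at this stage even the endpoints of the image of the crosscut are only known as cluster sets. You have already set up the clean repair: $\log|G|=\tfrac{2\pi}{P}(u\circ\varphi-1)$ is harmonic on $\mathbb{D}$, bounded, and extends continuously to the arc with boundary value $0$ there, hence extends harmonically across it by Schwarz reflection; a local harmonic conjugate then extends $\log G$, and so $G$ itself, holomorphically --- in particular continuously --- across the arc. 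With that substitution in place of the local-connectivity appeal, the proof is complete.
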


This is a consequence of Theorem $4.2.1$ in Krantz \cite{Kra:2006} along with an application of the
    Caratheodory extension theorem, which is Theorem $13.2.3$ in Greene and Krantz \cite{Krantz:OCV}.

\medbreak
Let $\Omega$ be a multiply connected domain in $\mathbb{C}$ whose boundary is composed of pairwise disjoint
    Jordan curves $\Gamma_1, \dots, \Gamma_k$.
By the Jordan curve theorem each curve $\Gamma_j$, for $j = 1, \dots, k$, section $\mathbb{C}$ into two disjoint sets,
    one bounded and the other unbounded.
By the classic Riemann mapping theorem there exists a biholomorphic function $\Phi$ between the bounded component and
    $\mathbb{D}$, and by the Caratheodory extension theorem it extends continuously to the boundary.
\begin{definition}
\label{def:2.7}
Let $\Omega \subset \mathbb{C}$ be a multiply connected domain whose boundary is composed of pairwise disjoint
    Jordan curves $\Gamma_1, \dots, \Gamma_k$.
We say that $E \subset \partial \Omega$ is an \emph{arclength null set} if $\Phi_j(E \cap \Gamma_j)$
    is of arclength measure $0$, where $\Phi_j$ is a biholomorphic function from the bounded domain with boundary
    $\Gamma_j$ and $\mathbb{D}$.
\end{definition}
Note that the biholomorphic maps $\Phi_j$ are not unique, so this definition must be justified.
To do this we use a result of Fatou, which is Theorem $13.4.11$ in \cite{Krantz:OCV}.
It says that if $f$ is a continuous function on $\overline{\mathbb{D}}$ that is holomorphic on $\mathbb{D}$ and $f$
    vanishes on a subset of $\mathbb{T}$ of arclength measure greater than $0$ than $f = 0$.

Let $\Phi_1$ and $\Phi_2$ be biholomorphic functions from the bounded domain with boundary $\Gamma_j$, for some
    $j \in \{1, \dots, k\}$,
    that are continuous to the boundary and such that $F = \Phi_1(E)$ is of arclength measure $0$.
By the Rudin-Carleson theorem there exists a non-constant function $f$ that is continuous on $\overline{\mathbb{D}}$,
    holomorphic on $\mathbb{D}$, and
    $f|_F = 0$.
Define $g = f \circ \Phi_1 \circ \Phi_2^{-1}$ and note that $g$ is non-constant and $g|_{\Phi_2(E)} = 0$.
So $\Phi_2(E)$ is also of arclength measure $0$.

We can now use Proposition~\ref{prop:2.4} to prove Theorem \ref{thm:1.1}.
Note that we only prove it for connected sets.
The general case breaks down to the connected case by considering each connected component on its own.
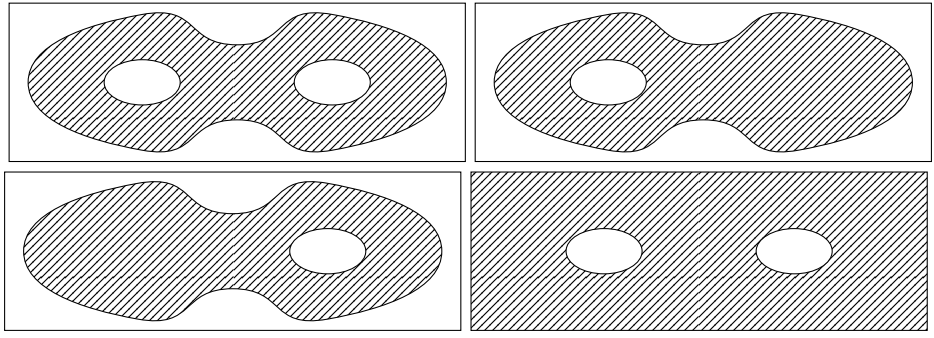
\begin{figure}
    { 
        \def\outer
        {
            plot [name = A, smooth cycle, tension = 1.0] coordinates
            {
                (0, 3.317)
                (5, 6)
                (11, 0)
                (5, -6)
                (0, -3.317)
                (-5, -6)
                (-11, 0)
                (-5, 6)
            }
        }
        \def\innerone{(-5, 0) circle (2)}
        \def\innertwo{(5, 0) circle (2)}
        \def\border{(-12, -7) rectangle (12, 7)}
        \begin{tikzpicture}[x = 0.5cm, y = 0.3cm, scale = 0.5]
            \clip[draw] \border;
            \draw \outer;
            \draw \innerone;
            \draw \innertwo;
            \fill[even odd rule, pattern = north east lines] \outer \innerone \innertwo;
        \end{tikzpicture}
        \begin{tikzpicture}[x = 0.5cm, y = 0.3cm, scale = 0.5]
            \clip[draw] \border;
            \draw \outer;
            \draw \innerone;
            \fill[even odd rule, pattern = north east lines] \outer \innerone;
        \end{tikzpicture}

        \vspace{2.5pt}
        \begin{tikzpicture}[x = 0.5cm, y = 0.3cm, scale = 0.5]
            \clip[draw] \border;
            \draw \outer;
            \draw \innertwo;
            \fill[even odd rule, pattern = north east lines] \outer \innertwo;
        \end{tikzpicture}
        \begin{tikzpicture}[x = 0.5cm, y = 0.3cm, scale = 0.5]
            \clip[draw] \border;
            \draw \innerone;
            \draw \innertwo;
            \fill[even odd rule, pattern = north east lines] \innerone \innertwo \border;
        \end{tikzpicture}
    }

    \caption
    {
        Illustrated is an example of $\Omega$ from the proof of Theorem~\ref{thm:1.1},
            were $k = 3$.
        The figure in the upper left shows $\Omega$, while
            the other three show $D_{1, 2}$, $D_{1, 3}$, and $D_{2, 3}$,
            depending on how the components of the boundary are numbered.
    }
\end{figure}

\begin{proof}[Proof of Theorem~\ref{thm:1.1}]
Let
    $\Gamma_1, \dots, \Gamma_k$ be the connected components of $\partial \Omega$,
    $D_{j, \ell}$, for $j \neq \ell$,
        be the doubly connected domain in the Riemann sphere $\mathbb{C}_\infty$ that has boundary
        $\Gamma_j \cup \Gamma_\ell$ and $\Omega \subset D_{j, \ell}$,
    $D_j$ is the simply connected domain in the Riemann sphere that has boundary $\Gamma_j$ and $\Omega \subset D_j$,
    and $\gamma > 0$ be small enough that $|f| + 2\gamma < M$, on $E$.
By the Rudin-Carleson theorem there exist continuous functions $F_j \colon \overline{D}_j \rightarrow \mathbb{C}$
    that are holomorphic on $D_j$,
    $F_j = f$ on $E \cap \Gamma_j$, and 
    $|F_j| < M - 2\gamma$ on $\Gamma_j$.
We then choose $\varepsilon > 0$ small enough that
\begin{equation}
\label{eq:1}
    (1 + \varepsilon)^{k - 1} |F_j(z)|
    <
    |F_j(z)| + \gamma
    < M(z) - \gamma,
    \quad
    z \in \Gamma_j,\ 
    j = 1, \dots, k
\end{equation}
and $\delta > 0$ small enough that 
\begin{equation}
\label{eq:2}
    \delta (1 + \varepsilon)^{k - 2}
    \sum_{\substack{\ell = 1 \\ \ell \neq j}}^k
        |F_\ell(z)|
    <
    \gamma,
    \quad
    z \in \Gamma_j,\ 
    j = 1, \dots, k.
\end{equation}
For $j \neq \ell$,
    by Proposition~\ref{prop:2.4} and Theorem~\ref{thm:2.6},
    there exist continuous functions $h_{j, \ell}$ on
    $\overline{D_{j, \ell}}$ that are holomorphic on $D_{j, \ell}$ and satisfy,
\begin{enumerate}
    \item[(A1)] $h_{j, \ell} = 1$ on $E \cap \Gamma_j$,
    \item[(A2)] $h_{j, \ell} = 0$ on $E \cap \Gamma_\ell$,
    \item[(A3)] $|h_{j, \ell}| < 1 + \varepsilon$ on $\Gamma_j$, and
    \item[(A4)] $|h_{j, \ell}| < \min\{\delta, 1 + \varepsilon\}$ on $\Gamma_\ell$.
\end{enumerate}
By points (A3) and (A4), along with the maximum modulus principle, we have that $|h_{j, \ell}| < 1 + \varepsilon$ on
    $D_{j, \ell}$.
We can now define continuous function $h_j$ on $\overline{\Omega}$ by
\begin{equation*}
    h_j
    =
    \prod_{\substack{\ell = 1 \\ \ell \neq j}}^k
        h_{j, \ell}
\end{equation*}
satisfying
\begin{enumerate}
    \item[(B1)] $h_j = 1$ on $E \cap \Gamma_j$,
    \item[(B2)] $h_j = 0$ on $E \setminus \Gamma_j$,
    \item[(B3)] $|h_j| < (1 + \varepsilon)^{k - 1}$ on $\Gamma_j$, and
    \item[(B4)] $|h_j| < \delta(1 + \varepsilon)^{k - 2}$ on $\partial \Omega \setminus \Gamma_j$.
\end{enumerate}
Point (B1) follows from (A1) and (B2) follows from (A2).

We are now ready to piece together our extension, letting
\begin{equation*}
    F
    =
    \sum_{\ell = 1}^k F_\ell \cdot h_\ell.
\end{equation*}
Note first that $F$ is continuous on $\overline{\Omega}$ and holomorphic on $\Omega$
    and if $w \in E$ there is an $j$ such that $w \in \Gamma_j$ and
\begin{equation*}
    F(w)
    =
    \sum_{\ell = 1}^k F_\ell(w) h_\ell(w)
    =
    F_j(w)
    =
    f(w),
\end{equation*}
by point (B1) and (B2).
So $F|_E = f$.
Additionally, we have for $w \in \partial \Omega$ an $j$ such that $w \in \Gamma_j$ and
\begin{equation*}
    |F(w)|
    \leq
    \sum_{\ell = 1}^k |F_\ell(w)h_\ell(w)|
    \leq
    (1 + \varepsilon)^{k - 1} |F_j(w)|
    +
    \delta(1 + \varepsilon)^{k - 2}
    \sum_{\substack{\ell = 1 \\ \ell \neq j}}^k
        |F_\ell(w)|,
\end{equation*}
by (B3) and (B4).
By \eqref{eq:1} and \eqref{eq:2} we have that
\begin{equation*}
    |F(w)|
    \leq
    M(w) - \gamma + \gamma
    =
    M(w).
    \qedhere
\end{equation*}
\end{proof}

\section{Annular F. and M. Riesz theorem}
\label{sec:03}
In this section we will prove Theorem~\ref{thm:1.2} which, as shown in Section \ref{sec:02},
    proves Theorem~\ref{thm:1.1}.
The method of the proof involves decomposing the measure in a way similar to how a holomorphic function on an annulus
    can be decomposed into the sum of holomorphic functions on discs whose intersection is the annulus.

\medbreak
We will start by recalling some basic concepts of the decomposition of complex measures into their Fourier series.
Let us fix $a > 0$ and a complex measure $\mu$ on $a \mathbb{T}$ and define the $j$-th Fourier coefficient of $\mu$ by
\begin{equation*}
    \widehat{\mu}_j
    =
    \int_{a\mathbb{T}} z^{-j}\ d\mu(z),
    \quad
    j \in \mathbb{Z}.
\end{equation*}
We get a natural upper bound for these coefficients by
\begin{equation}
\label{eq:3}
    |\widehat{\mu}_j|
    \leq
    \int_{a\mathbb{T}} |z^{-j}|\ d|\mu|
    =
    a^{-j} |\mu|(a\mathbb{T}),
    \quad
    j \in \mathbb{Z}.
\end{equation}
So we have a linear operator $\alpha$ from the Banach space of complex measures to the sequence space
    $\mathbb{C}^{\mathbb{Z}}$, given by
    $\alpha(\mu) = (\dots, \widehat{\mu}_{-1}, \widehat{\mu}_0, \widehat{\mu}_1, \dots)$.
To see that $\alpha$ is injective we set $\mu$ such that $\alpha(\mu) = 0$.
Then $\int_{\mathbb{T}} z^j\ d\mu = 0$ and if $f$ is continuous on $\mathbb{T}$ then
\begin{equation*}
    \int_{\mathbb{T}} f\ d\mu
    =
    \lim_{j \rightarrow \infty}
    \int_{\mathbb{T}} f_j\ d\mu
    =
    0
\end{equation*}
where $f_j$ are a linear combination of $\{z \mapsto z^j \,; j \in \mathbb{Z}\}$ and a uniform approximation of $f$.
So $\mu = 0$, and $\alpha$ is injective.
We do not, however, have that $\alpha$ is surjective.
The proof of Theorem~\ref{thm:1.2} effectively boils down to justifying that a given sequences in
    $\mathbb{C}^{\mathbb{Z}}$ is in the image of $\alpha$.
\begin{proof}[Proof of Theorem~\ref{thm:1.2}]
Let
    $E \subset \partial A$ be of arclength measure $0$,
    $\widehat{\mu}_j = \widehat{\mu}^1_j$,
    $\mu^0$ and $\mu^1$ be the restrictions of $\mu$ to $r_0 \mathbb{T}$ and $\mathbb{T}$, respectively, and
\begin{equation*}
    f(z) 
    =
    \sum_{j = 1}^\infty \widehat{\mu}_j z^j,
    \quad
    z \in \mathbb{D}.
\end{equation*}
The convergence of $f$ follows from \eqref{eq:3}, with $a = 1$.
We do not know if we can use $f$ to define a measure on $\mathbb{T}$, but we can use it to define a measure on
    $r_0 \mathbb{T}$ by $d\lambda^0 = f\,d\sigma$, satisfying
\begin{equation*}
    \widehat{\lambda}^0_k
    =
    \int_{r_0 \mathbb{T}}
        z^{-k}\ d\lambda^0(z)
    =
    \int_{r_0 \mathbb{T}}
        f(z)z^{-k}\ d\sigma(z)
    =
    \left \{
    \begin{array}{l l}
        \widehat{\mu}_k, & k \in \mathbb{N}^*, \\
        0, & k \not \in \mathbb{N}^*,
    \end{array}
    \right .
\end{equation*}
where $\sigma$ denotes the standardized arclength measure.
We define $\eta^0 = \mu^0 - \lambda^0$.
    By Theorem~\ref{thm:2.3}(ii)
    we have that $E$ is a null set with respect to $\eta^0$ and by construction it is a
    null set with respect to $\lambda^0$, and consequently also with respect to $\mu^0$.
By \eqref{eq:3}, with $a = r_0$, we have that
    $g(z) = \sum_{j = 1}^\infty \widehat{\mu}_{-j} z^{-j}$ converges when $|z| > r_0$.
So we define a measure on $\mathbb{T}$ by $d\eta^1 = g\,d\sigma$, satisfying
\begin{equation*}
    \widehat{\eta}^1_k
    =
    \int_{\mathbb{T}}
        z^{-k}\ d\eta^1(z)
    =
    \int_{\mathbb{T}}
        g(z)z^{-k}\ d\sigma(z)
    =
    \left \{
    \begin{array}{l l}
        0, & k \in \mathbb{N}, \\
        \widehat{\mu}_k, & k \not \in \mathbb{N},
    \end{array}
    \right .
\end{equation*}
and $\lambda^1 = \mu^1 - \eta^1$.
Again, we have that $E$ is a null set with respect to $\eta^1$ and $\lambda^1$,
    and hence,
    also with respect to $\mu^1$.
Since $E$ is a null set with respect to $\mu$ if and only if it is a null set with respect to
    $\mu_1$ and $\mu_2$, the proof is concluded.
\end{proof}

\section{A Rudin-Carleson theorem with interpolation}
\label{sec:04}
Proposition~\ref{prop:2.4} does not cover the degenerate annulus $A = \mathbb{D} \setminus \{0\}$,
    that is the punctured disc.
Due to the maximum modulus principle we can not expected to maintain the upper bounded, even when the extension exists.
The goal of this section is to show the existence of an extension, without an upper bound.
\begin{theorem}
Let
    $\Omega \subset \mathbb{C}$ be an open $(k + \ell)$-connected domain,
    with $k > 1$,
    such that the boundary of $\Omega$ consists of $k$ pairwise disjoint Jordan curves and $\ell$ points,
    $E \subset \partial \Omega$ be a closed arclength null set, and
    $f \colon E \rightarrow \mathbb{C}$ a continuous function.
Then there exists a continuous function $F$ on $\overline{\Omega}$ that is holomorphic on $\Omega$ and $F|_E = f$.
\end{theorem}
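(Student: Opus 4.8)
The plan is to reduce to Theorem~\ref{thm:1.1} by a limiting/perturbation argument that removes the puncture points one at a time, or more efficiently, to imitate the proof of Theorem~\ref{thm:1.1} directly but replace the annular building blocks at the punctured ends with functions coming from the disc. Concretely: let $\Gamma_1, \dots, \Gamma_k$ be the Jordan curve components of $\partial\Omega$ and let $p_1, \dots, p_\ell$ be the point components. Since $E$ is a closed subset of $\partial\Omega = \Gamma_1 \cup \dots \cup \Gamma_k \cup \{p_1, \dots, p_\ell\}$ and the $p_i$ are isolated, the restriction $f|_{E \cap (\Gamma_1 \cup \dots \cup \Gamma_k)}$ together with the finitely many prescribed values $f(p_i)$ (for those $p_i$ that lie in $E$) must be matched. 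First I would handle the non-point part: fill in a small disc of radius $\rho$ around each puncture $p_i$ to obtain a genuine $k$-connected domain $\Omega_\rho$ with Jordan-curve boundary, apply Theorem~\ref{thm:1.1} (with any convenient continuous bound $M$) to get $G_\rho$ continuous on $\overline{\Omega_\rho}$, holomorphic on $\Omega_\rho$, agreeing with $f$ on $E$ away from the punctures. The issue is that $G_\rho$ need not extend continuously to $p_i$ and need not take the value $f(p_i)$ there.

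To control the puncture points, I would use interpolation gadgets analogous to the $h_{j,\ell}$ of Theorem~\ref{thm:1.1}'s proof. For each puncture $p_i$, since $\{p_i\}$ together with a boundary arc set of measure zero is still an arclength null set, one can build (via Proposition~\ref{prop:2.4} applied to an annulus whose outer boundary surrounds $p_i$ and inner boundary is a small circle, composed with a Riemann map) a bounded holomorphic function $u_i$ on $\Omega$, continuous up to $\overline\Omega$, with $u_i = 1$ on $E \cap \Gamma$-part, $u_i$ small near the other punctures, and $u_i(p_i)$ prescribed; symmetrically an "annihilator" $v_i$ vanishing at $p_i$ to high order. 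The point is that near an isolated boundary point the maximum modulus principle forces a bounded holomorphic function to extend continuously (a removable-type estimate: a bounded holomorphic function on a punctured neighborhood extends holomorphically across the puncture). So if every piece we assemble is \emph{bounded} near each $p_i$, the resulting $F$ automatically extends continuously to each $p_i$, and we only need to arrange the correct value there, which is a finite linear-algebra condition handled by the $u_i$'s exactly as the $h_j$'s handle the boundary curves in Theorem~\ref{thm:1.1}.

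So the assembled extension would have the schematic form $F = \sum_{j=1}^k F_j h_j + \sum_{i=1}^\ell c_i u_i$ where the $F_j h_j$ reproduce $f$ on $E \cap \Gamma_j$ (with the $u_i$'s made to vanish on the $\Gamma_j$-part of $E$), and the $c_i$ are chosen so that $F(p_i) = f(p_i)$ after subtracting the (bounded, hence well-defined) values $\sum_j (F_j h_j)(p_i)$; triangularity of the system $u_i(p_i') \approx \delta_{ii'}$ makes this solvable. The main obstacle I anticipate is the removable-singularity step done with the required uniformity: one must verify that each building block really is \emph{bounded} on a punctured neighborhood of each $p_i$ — in particular that the $F_j$ coming from the simply connected Riemann-map construction and the annular $h_{j,\ell}$, $u_i$ stay bounded there — so that Riemann's removable singularity theorem applies and continuity at $p_i$ is genuine rather than merely "along the boundary." Once boundedness near the punctures is secured, dropping the bound $M$ (as the statement does) removes the only obstruction that the maximum modulus principle would otherwise impose, and the rest is the same bookkeeping as in the proof of Theorem~\ref{thm:1.1}.
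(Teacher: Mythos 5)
Your overall strategy matches the paper's: pass to the filled-in domain $\Omega \cup \{p_1, \dots, p_\ell\}$, which is $k$-connected with Jordan curve boundary, extend $f$ from the curve part of $E$ by Theorem~\ref{thm:1.1}, and then correct the values at the punctures by adding multiples of holomorphic functions that vanish on the curve part of $E$. Two preliminary remarks. First, the removable-singularity worry that occupies much of your second paragraph is moot: $G_\rho$ is produced by Theorem~\ref{thm:1.1} on $\Omega_\rho = \Omega \cup \{p_1,\dots,p_\ell\}$, so it is already holomorphic at each $p_i$ (not merely bounded near it), and the same is automatic for any correction gadget built on the filled domain; Riemann's theorem never needs to be invoked, and the only issue at $p_i$ is the value, not continuity. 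Second, your conditions on $u_i$ are stated inconsistently: you first require $u_i = 1$ on the curve part of $E$ and later require it to vanish there. Only the latter is usable, since otherwise adding $c_i u_i$ destroys the interpolation on $E \cap \Gamma_j$.

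The genuine gap is the construction of the $u_i$. Proposition~\ref{prop:2.4} applied to ``an annulus whose outer boundary surrounds $p_i$ and inner boundary is a small circle'' produces a function on that annulus (or on a doubly connected domain conformally equivalent to it), and no such domain both contains $\Omega$ and has $p_i$ in its interior; so as described the gadget is not defined on all of $\Omega$, has no value at $p_i$, or both, and the subsequent linear algebra has nothing to act on. The repair is to obtain the gadgets from Theorem~\ref{thm:1.1} itself on the filled domain: take a non-constant $\widehat{H}_j$ vanishing on $E \cap (\Gamma_1 \cup \dots \cup \Gamma_k)$ (non-constancy is forced by prescribing a nonzero value at one extra boundary point of measure zero), divide out the possible zero of $\widehat{H}_j$ at $p_j$ and rescale so that $\widehat{H}_j(p_j) = 1$ (the quotient by $(z-p_j)^m$ is still continuous on $\overline{\Omega}$ and still vanishes on the curve part of $E$), and multiply by the Lagrange factor $\prod_{i \neq j}(z - p_i)/(p_j - p_i)$ so that $H_j(p_i) = \delta_{ij}$ exactly. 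This makes your linear system diagonal, so no diagonal-dominance argument is needed, and $F = \widehat{F} + \sum_j (f(p_j) - \widehat{F}(p_j))H_j$ finishes the proof.
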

\looseness = -1
When we say that $E$ is an arclength null set we mean it in the same sense as Definition~\ref{def:2.7},
    but we ignore the connected components of $\partial \Omega$ that are points.
That is, we say that $E$ is an \emph{arclength null set} if $E \cap (\Gamma_1 \cup \dots \cup \Gamma_k)$
    is an arclength null set.
\begin{proof}
Let $\Gamma_1, \dots, \Gamma_k$ be the Jordan curves in the $\partial \Omega$ and $p_1, \dots, p_\ell$ be the points.
We may assume that $p_1, \dots, p_\ell \in E$ and set $w_j = f(p_j)$ and $E' = \Gamma_1 \cup \dots \cup \Gamma_k$.
For each point $p_j$ there exists $\widehat{H}_j$, by Theorem~\ref{thm:1.1}, such that $\widehat{H}_j|_{E'} = 0$.
By factoring out potential roots at $p_j$ and scaling we may also assume that $\widehat{H}_j(p_j) = 1$.
Setting
\begin{equation*}
    H_j(z)
    =
    \frac
    {
        (z - p_1)\cdots(z - p_{j - 1})(z - p_{j + 1})\cdots(z - p_\ell)
    }
    {
        (p_j - p_1)\cdots(p_j - p_{j - 1})(p_j - p_{j + 1})\cdots(p_j - p_\ell)
    }
    \widehat{H}_j(z),
    \quad
    z \in \overline{\Omega}
\end{equation*}
we have that $H_j$ is continuous on $\overline{\Omega}$,
    holomorphic on $\Omega$,
    $H_j(p_j) = 1$, and
    $H_j = 0$ on $E \setminus \{p_j\}$.
With $\widehat{F}$ such that $\widehat{F}|_{E'} = f$ we set
\begin{equation*}
    F(z)
    =
    \widehat{F}(z)
    +
    \sum_{j = 1}^\ell
        (f(p_j) - \widehat{F}(p_j))H_j(z),
    \quad
    z \in \overline{\Omega}.
    \qedhere
\end{equation*}
\end{proof}

\bibliographystyle{siam}
\bibliography{rs_bibref}

\end{document}